\theoremstyle{plain}
    \newtheorem{thm}{Theorem}[section]
    \newtheorem{example}[thm]{Example}
    \newtheorem{lemma}[thm]{Lemma}
    \newtheorem{theorem}[thm]{Theorem}
\theoremstyle{definition}
    \newtheorem{remark}[thm]{Remark}
\theoremstyle{remark}
    \newtheorem{setup}[thm]{}
\newcommand{\BCC}{\mathbb{C}}
\newcommand{\BFF}{\mathbb{F}}
\newcommand{\PP}{\mathbb{P}}
\newcommand{\BPP}{\mathbb{P}}
\newcommand{\BQQ}{\mathbb{Q}}
\newcommand{\BRR}{\mathbb{R}}
\newcommand{\BZZ}{\mathbb{Z}}
\newcommand{\SO}{\mathcal{O}}
\newcommand{\divv}{\operatorname{div}}
\newcommand{\Gal}{\operatorname{Gal}}
\newcommand{\NE}{\operatorname{NE}}
\newcommand{\Pic}{\operatorname{Pic}}
\newcommand{\Sing}{\operatorname{Sing}}
\newcommand{\isom}{\simeq}
\newcommand{\ratmap}
{{\,\cdot\negmedspace\cdot\negmedspace\cdot\negmedspace\to\,}}
\begin{document}

\title[Invariant hypersurfaces]{
Invariant hypersurfaces of endomorphisms of the projective $3$-space}

\author{De-Qi Zhang
}
\address
{
\textsc{Department of Mathematics, National University of Singapore} \endgraf
\textsc{10 Lower Kent Ridge Road, Singapore 119076
}}
\email{matzdq@nus.edu.sg}

\begin{abstract}
We consider surjective endomorphisms $f$ of degree $> 1$ on the projective
$n$-space $\PP^n$ with $n = 3$,
and $f^{-1}$-stable hypersurfaces $V$. We show
that $V$ is a hyperplane (i.e., $\deg(V) = 1$) but
with four possible exceptions; it is conjectured that $\deg(V) = 1$ for any $n \ge 2$;
cf. \cite{FS}, \cite{BCS}.
$$
\begin{small}
\text{\it Dedicated to Prof. Miyanishi on the occasion of his 70th birthday}
\end{small}
$$
\end{abstract}

\subjclass[2000]{37F10, 32H50, 14E20, 14J45}
\keywords{endomorphism, iteration, projective $3$-space}

\thanks{The author is supported by an ARF of NUS}

\maketitle

\section{Introduction}

We work over the field $\BCC$ of complex numbers.
In this paper, we study properties of $f^{-1}$-stable prime divisors of $X$
for endomorphisms $f : \PP^3 \to \PP^3$.
Below is our main result.

\begin{theorem}\label{ThA}
Let $f : \BPP^3 \to \BPP^3$ be an endomorphism of degree $> 1$
and $V$ an irreducible hypersurface such that $f^{-1}(V) = V$.
Then either $\deg(V) = 1$, i.e., $V$ is a hyperplane, or $V$ equals one of the four cubic
hypersurfaces  $V_i = \{S_i = 0\}$, where $S_i$'s are as follows, with suitable projective coordinates:
\begin{itemize}
\item[(1)]
$S_1 = X_3^3 + X_0X_1X_2$;
\item[(2)]
$S_2 = X_0^2X_3 + X_0X_1^2 + X_2^3$;
\item[(3)]
$S_3 = X_0^2X_2 + X_1^2 X_3$;
\item[(4)]
$S_4 = X_0X_1X_2 + X_0^2X_3 + X_1^3$.
\end{itemize}
\end{theorem}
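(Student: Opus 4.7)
The plan is to first bound $d := \deg V$ by a ramification argument, then carry out a case analysis for each remaining value. Since $f^*H \equiv qH$ for the hyperplane class $H$ with integer $q \geq 2$ (as $\deg f = q^3 > 1$), and since $V$ is irreducible with $f^{-1}(V) = V$ set-theoretically, one has $f^*V = qV$ as Weil divisors, so $V$ appears in the ramification divisor $R_f$ of $f$ with coefficient $q - 1$. Combining with $R_f \equiv 4(q-1)H$ coming from $K_{\BPP^3} = f^*K_{\BPP^3} + R_f$, the effectivity of $R_f - (q-1)V \equiv (q-1)(4-d)H$ yields immediately $d \leq 4$.

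The case $d = 4$ is then quick: $R_f - (q-1)V$ has trivial class and is effective, hence is zero, so $f$ restricts to an \'etale self-map of $\BPP^3 \setminus V$. But $\pi_1(\BPP^3 \setminus V) \cong \BZZ/4$, and every connected \'etale cover of this open set has degree dividing $4$, contradicting $\deg f = q^3 \geq 8$. For $d = 2$, the quadric $V$ is either $\BPP^1 \times \BPP^1$ or a quadric cone; in either case I would exploit the ruling of $V$: each line $L \subset V$ has degree $1$ in $\BPP^3$, whereas $f(L)$ has degree dividing $q$, and tracking the action of $f|_V$ on the rulings together with the constraint that $f|_V$ is the restriction of the ambient endomorphism yields a numerical contradiction.

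The heart of the proof is the $d = 3$ case. Here $V$ is a cubic surface with $K_V = -H|_V$, and $f|_V : V \to V$ has degree $q^2$. I would first rule out smooth $V$ using rigidity of del Pezzo surfaces under surjective self-maps of degree $> 1$: the action of $(f|_V)^*$ on $\Pic(V) \cong \BZZ^7$ must preserve both the intersection form and the anticanonical class $-K_V$, leaving no room for a degree $q^2 > 1$ pullback. Thus $V$ must be singular, and I would then classify singular irreducible cubic surfaces in $\BPP^3$ according to their singular locus --- isolated (Du Val or worse) versus singular along a curve (non-normal) --- and for each type determine whether the associated normal form is compatible with an endomorphism of $\BPP^3$ satisfying $f^*V = qV$. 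The four surfaces $V_1, \ldots, V_4$ emerge as the only possibilities; conversely, each $S_i$ is weighted-homogeneous in suitable coordinates, which allows an explicit construction of a monomial endomorphism of $\BPP^3$ preserving it.

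The main obstacle is the detailed case-by-case analysis when $d = 3$: sifting through the classification of singular cubic surfaces and verifying that exactly four admit a compatible self-map of $\BPP^3$. A secondary subtlety lies in $d = 2$, since an irreducible quadric is intrinsically toric and does admit degree $> 1$ endomorphisms of its own; the contradiction must come from the requirement that the self-map extends to the ambient $\BPP^3$, not merely from properties of $V$ itself.
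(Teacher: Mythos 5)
Your reduction to $\deg V\le 4$ via $f^*V=qV$ and $R_f\sim 4(q-1)H$ is correct and is essentially how the cited literature begins; the paper itself simply invokes \cite[Theorem 1.5 in arXiv version]{nz2} to reduce at once to the case of an irreducible rational \emph{singular} cubic, so your sketches for $d=2$, $d=4$ and for smooth cubics replace a citation rather than the paper's own work. Two of those sketches have soft spots. For $d=4$ you assert $\pi_1(\BPP^3\setminus V)\cong\BZZ/4$; that is only the abelianization, and for singular quartics the complement can have non-abelian fundamental group, so what you actually need (finiteness, so that the \'etale self-map lifts to the universal cover and contradicts simple connectedness) requires a Zariski--Lefschetz argument or a different route. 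And your closing claim that each $S_i$ visibly admits a monomial endomorphism of $\BPP^3$ preserving it is unsupported and, as stated, false: the paper explicitly says it does not know whether $V_2,V_3,V_4$ carry any such endomorphism (only $V_1$ is realized, via Example \ref{ex1ThA}); fortunately the theorem does not assert this converse.

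The genuine gap is the part you yourself call the heart: the singular cubic case. ``Classify singular cubics and determine which are compatible'' is not an argument, and the compatibility check is where all the content of the paper lives. The paper's mechanism, absent from your proposal, is: (i) after replacing $f$ by an iterate, every negative curve $E$ on a normal cubic $V$ satisfies $f_V^*E=qE$, every such curve is a line, and $\#S(V)\le 3$ with equality exactly when $K_V+E_V\sim_{\BQQ}0$ (Lemma \ref{normal}, proved by intersecting $\Delta\sim(1-q)(K_V+E_V)$ with the ample $-K_V$); (ii) a descent through the MMP to Picard number $\le 2$ together with Ye's classification of Gorenstein del Pezzo cubics by singularity type; (iii) for each type, either a Hurwitz/ramification count along an invariant $\BPP^1$-fibration, or a cyclic cover on which some $M_i$ splits into $g^{-1}$-invariant negative curves (forcing $f_V^{-1}(M_i)=M_i$) followed by the log-canonicity obstruction of \cite[Theorem 4.3.1]{ENS} against Kawamata's classification; and (iv) for non-normal $V$, the Dolgachev--Reid classification plus projection from the vertex of a cone to reduce to an invariant plane cubic, which cannot exist. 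Without some version of (i)--(iv) you cannot see why, say, $V(A_1+A_5)$, $V(D_5)$, or a cone over a nodal cubic is excluded while $V(3A_2)$ and $V(E_6)$ survive, so the proposal does not yet prove the theorem.
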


We are unable to rule out the four cases in Theorem \ref{ThA}
and do not know whether there is any endomorphism $f_{V_i} : V_i \to V_i$
of $\deg(f_{V_i}) > 1$ for $i = 2, 3$ or $4$,
but see Examples \ref{ex1ThA} (for $V_1$)
and \ref{ex2ThA} below.

\begin{example}\label{ex2ThA}
{\rm
There are many endomorphisms $f_{V'} : V' \to V'$ of $\deg(f_{V'}) > 1$ for the normalization
$V'$ of $V = V_i$ ($i = 3, 4$), where $V' \isom \BFF_1$ in either case (cf.~Remark $\ref{rThA}$ below).
Conjecture \ref{set1.1} below asserts that $f_{V'}$ is {\it not} lifted from
any endomorphism $f : \BPP^3 \to \BPP^3$
restricted to the non-normal cubic surface $V$.
Indeed, consider the endomorphism
$f_{\BPP^2} : \BPP^2 \to \BPP^2$ ($[X_0, X_1, X_2] \to [X_0^q, X_1^q, X_2^q]$)
with $q \ge 2$. It lifts to an endomorphism $f_{\BFF_1} : \BFF_1 \to \BFF_1$
of $\deg(f_{\BFF_1}) = q^2$,
where $\BFF_1 \to \BPP^2$ is the blowup of the point $[0, 0, 1]$ fixed
by $f_{\BPP^2}^{-1}$.
}
\end{example}

\begin{remark}\label{rThA}
Below are some remarks about Theorem \ref{ThA}.
\begin{itemize}
\item[(1)]
The non-normal locus of $V_i$ ($i = 3, 4$) is a single line $C$
and stabilized by $f^{-1}$.
Let $\sigma: V_i' \to V_i$ ($i = 3, 4$) be the normalization.
Then $V_i'$ is the (smooth) Hirzebruch surface $\BFF_1$
(i.e., the one-point blowup of $\BPP^2$; see \cite[Theorem 1.5]{AF}, \cite{Re})
with the conductor $\sigma^{-1}(C) \subset V_i'$
a smooth section at infinity (for $V_3$), and the union of the negative section
and a fibre (for $V_4$), respectively. $f|V_i$ lifts to a (polarized)
endomorphism $f_{V_i'} : V_i' \to V_i'$.

\item[(2)]
$V_1$ (resp. $V_2$) is unique as a normal cubic
(or degree three del Pezzo) surface of Picard number one
and with the singular locus $\Sing V_1 = 3A_2$ (resp. $\Sing V_2 = E_6$);
see \cite[Theorem 1.2]{Ye}, and \cite[Theorem 4.4]{HW} for the anti-canonical
embedding of $V_i$ in $\BPP^3$.
$V_1$ contains exactly three lines of triangle-shaped whose three vertices
form the singular locus of $V_1$.
And $V_2$ contains a single line on which lies its unique singular point.
$f^{-3}$
fixes the singular point(s) of $V_i$ ($i = 1, 2$).

\item[(3)]
$f^{-1}$ (or its positive power) does not stabilize the only line $L$ on $V_2$
by using \cite[Theorem 4.3.1]{ENS} since the pair $(V_2, L)$ is not
log canonical at the singular point of $V_2$.
For $V_1$, we do not know whether $f^{-1}$ (or its power) stabilizes the three lines.
\end{itemize}
\end{remark}

\begin{setup}\label{set1.1} {\bf A motivating conjecture.}
Here are some motivations for our paper.
It is conjectured that {\it every hypersurface $V \subset \BPP^n$
stabilized by the inverse $f^{-1}$ of an endomorphism
$f: \BPP^n \to \BPP^n$ of $\deg(f) > 1$, is linear}.
This conjecture is still open when $n \ge 3$ and $V$ is singular, since
the proof of \cite{BCS} is incomplete as we were informed by an author.
The smooth hypersurface case was settled in the affirmative in any dimension
by Cerveau - Lins Neto \cite{CL} and independently by Beauville \cite{Be}.
See also \cite[Theorem 1.5 in arXiv version]{nz2}, \cite{uniruled} and \cite{P3endo}
for related results.

By Theorem \ref{ThA}, this conjecture is true when $n = 3$ but with
four exceptional cubic surfaces $V_i$ which we could not rule out.

From the dynamics point of view, as seen in Dinh-Sibony \cite[Theorem 1.3, Corollary 1.4]{DS},
$f : \BPP^n \to \BPP^n$ behaves nicely {\it exactly} outside
those $f^{-1}$-stabilized subvarieties.
We refer to Fornaess-Sibony \cite{FS}, and \cite{DS} for further references.
\end{setup}

A smooth hypersurface $X$ in $\BPP^{n+1}$ with $\deg(X) \ge 3$ and $n \ge 2$,
has no endomorphism $f_X : X \to X$ of degree $> 1$
(cf. \cite{CL}, \cite[Theorem]{Be}).
However, singular $X$ may have plenty of endomorphisms $f_X$ of arbitrary degrees
as shown in Example \ref{ex1ThA} below.
Conjecture \ref{set1.1} asserts that such $f_{X}$
can not be extended to an endomorphism of $\BPP^{n+1}$.

\begin{example}\label{ex1ThA}
{\rm
We now construct many polarized endomorphisms for some degree $n+1$ hypersurface
$X \subset \BPP^{n+1}$, with $X$ isomorphic to the $V_1$ in Theorem \ref{ThA}
when $n = 2$.
Let
$$f = (F_0, \dots, F_n) : \BPP^n \to \BPP^n$$ ($n \ge 2$),
with $F_i = F_i(X_0, \dots, X_n)$ homogeneous,
be any endomorphism of degree $q^n > 1$,
such that $f^{-1}(S) = S$ for a reduced degree $n+1$ hypersurface
$S = \{S(X_0, \dots, X_n) = 0\}$.
So $S$ must be normal crossing and linear: $S = \sum_{i=0}^{n} S_i$
(cf.~\cite[Theorem 1.5 in arXiv version]{nz2}).
Thus we may assume that $f = (X_0^q, \dots, X_n^q)$ and
$S_i = \{X_i = 0\}$.
The relation $S \sim (n+1)H$ with $H \subset \BPP^n$ a hyperplane,
defines
$$\pi : X = Spec \oplus_{i=0}^n \SO(-iH) \to \BPP^n$$
which is
a Galois $\BZZ/(n+1)$-cover
branched over $S$ so that $\pi^*S_i = (n+1)T_i$ with the restriction
$\pi|T_i : T_i \to S_i$ an isomorphism.

This $X$ is identifiable with the degree $n+1$ hypersurface
$$\{Z^{n+1} = S(X_0, \dots, X_n)\} \subset \BPP^{n+1}$$
and has singularity of type $z^{n+1} = xy$
over the intersection points of $S$ locally defined as $xy = 0$.
Thus, when $n = 2$, we have $\Sing X = 3A_2$ and
$X$ is isomorphic to the $V_1$ in Theorem \ref{ThA} (cf.~Remark \ref{rThA}).
We may assume that
$$f^*S(X_0,\dots, X_n) = S(X_0,\dots, X_n)^q$$ after replacing
$S(X_0, \dots, X_n)$
by a scalar multiple, so
$f$ lifts to an endomorphism
$$g = (Z^q, F_0, \dots, F_n)$$ of $\BPP^{n+1}$
(with homogeneous coordinates $[Z, X_0, \dots, X_n]$),
stabilizing $X$, so that
$$g_X := g|X : X \to X$$ is a polarized endomorphism
of $\deg(g_X) = q^n$ (cf.~
\cite[Lemma 2.1]{nz2}). Note that
$g^{-1}(X)$ is the union of $q$ distinct hypersurfaces
$$\{Z = \zeta^i S(X_0, \dots, X_n)\} \subset \BPP^{n+1}$$
(all isomorphic to $X$), where $\zeta := \exp( 2 \pi \sqrt{-1}/q)$.

This $X$ has only Kawamata log terminal singularities
and $\Pic X = (\Pic \BPP^{n+1}) \, | \, X$ ($n \ge 2$) is of rank one, using Lefschetz type theorem
\cite[Example 3.1.25]{La} when $n \ge 3$.
We have $f^{-1}(S_i) = S_i$ and $g_X^{-1}(T_i) = T_i$,
where $0 \le i \le n$.

When $n = 2$, the relation $(n+1)(T_1 - T_0) \sim 0$ gives rise to an \'etale-in-codimenion-one
$\BZZ/(n+1)$-cover
$$\tau: \BPP^{n} \isom \widetilde{X} \to X$$ so that $\sum_{i=0}^{n} \tau^*T_i$
is a union of $n+1$ normal crossing hyperplanes;
indeed, $\tau$ restricted over $X \setminus \Sing X$, is its universal cover
(cf.~\cite[Lemma 6]{MZ}),
so that $g_X$ lifts up to $\widetilde{X}$.
A similar result {\it seems} to be true for $n \ge 3$,
by considering the `composite' of the $\BZZ/(n+1)$-covers given by $(n+1)(T_i - T_0) \sim 0$
($1 \le i < n$).
}
\end{example}

\section{Proofs of Theorem \ref{ThA} and Remark \ref{rThA}}

We use the standard notation in Hartshorne's book and \cite{KM}.

\begin{setup}
We now prove Theorem \ref{ThA} and Remark \ref{rThA}.
By \cite[Theorem 1.5 in arXiv version]{nz2},
we may assume that $V \subset \BPP^3$ is an irreducible rational {\it singular} cubic hypersurface.

We first consider the case where $V$ is non-normal.
Such $V$
is classified in
\cite[Theorem 9.2.1]{Do} to the effect that either $V = V_i$ ($i = 3, 4$)
or $V$ is a cone over a nodal or cuspidal rational planar cubic curve $B$.
The description in Remark \ref{rThA} on $V_3, V_4$ and their normalizations, is given in
\cite[Theorem 1.1]{Re}, \cite[Theorem 1.5, Case (C), (E1)]{AF};
the $f^{-1}$-invariance of the non-normal locus $C$ is proved in \cite[Proposition 5.4 in arXiv version]{nz2}.

We consider and will rule out the case where $V$ is a cone over $B$.
Since $V$ is normal crossing in codimension $1$ (cf. \cite[Theorem 1.5 or Proposition 5.4 in arXiv version]{nz2}),
the base $B$ of the cone $V$ is nodal. Let $P$ be the vertex of the cone $V$,
and $L \subset V$ the generating line lying over the node of $B$.
Then $f_V := f|V$ satisfies
the assertion that $f_V^{-1}(P) = P$. Indeed, the normalization
$V'$ of $V$ is a cone over a smooth rational (twisted) cubic curve (in $\BPP^3$), i.e.,
the contraction of the $(-3)$-curve on the Hirzebruch surface $\BFF_3$ of degree $3$;
$f_V$ lifts to an endomorphism $f_{V'}$ of $V'$ so that
the conductor $C' \subset V'$ is preserved by $f_{V'}^{-1}$
(cf.~\cite[Proposition 5.4 in arXiv version]{nz2}) and
consists of two distinct generating lines
$L_i$ (lying over $L$). Thus
$f_{V'}^{-1}$ fixes the vertex $L_1 \cap L_2$ (lying over $P$). Hence $f_V^{-1}(P) = P$
as asserted.

By \cite[Lemma 5.9 in arXiv version]{nz2}, $f: \BPP^3 \to \BPP^3$ (with $\deg(f) = q^3 > 1$ say)
descends, via the projection
$\BPP^3 \ratmap \BPP^2$ from the point $P$, to an endomorphism $h : \BPP^2 \to \BPP^2$
with $\deg(h) = q^2 > 1$ so that $h^{-1}(B) = B$. This and $\deg(B) = 3 > 1$
contradict the linearity property of $h^{-1}$-stable curves in $\BPP^2$
(see e.g. Theorem 1.5 and the references in \cite[arXiv version]{nz2}).
\end{setup}

\begin{setup}\label{set3.1}
Next we consider the case where $V \subset \BPP^3$ is a normal rational {\it singular} cubic hypersurface.
By the adjunction formula,
$$-K_V = -(K_{\BPP^3} + V)|V \sim H|V$$
which is ample, where $H \subset \BPP^3$ is a hyperplane.
Since $K_V$ is a Cartier divisor, $V$ has only Du Val (or rational double, or
$ADE$) singularities.
Let
$$\sigma : V' \to V$$ be the minimal resolution.
Then
$$K_{V'} = \sigma^*K_V \sim \sigma^*(-H|V) .$$
For $f : \BPP^3 \to \BPP^3$,
we can apply the result below to $f_V := f|V$.
\end{setup}

\begin{lemma}\label{normal}
Let $V \subset \BPP^3$ be a normal cubic surface, and
$f_V : V \to V$ an endomorphism such that $f_V^*(H|V) \sim qH|V$
for some $q > 1$ and the hyperplane $H \subset \BPP^3$.
Let
$$S(V) = \{G \subset V \, | \, G: \text{irreducible}, G^2 < 0\}$$
be the set of negative curves on $V$,
and set
$$E_V := \sum_{E \in S(V)} \, E .$$
Replacing $f_V$
by its positive power, we have:

\begin{itemize}
\item[(1)]
If $f_V^*G \equiv a G$ for some Weil divisor $G \not\equiv 0$, then $a = q$.
We have
$$f_V^* (L|V) \sim q(L|V)$$ for every divisor $L$ on $\BPP^3$.
Especially, $\deg(f_V) = q^2$; $K_V \sim -H|V$  satisfies $f_V^*K_V \sim qK_V$.

\item[(2)]
$S(V)$ is a finite set. $f_V^*E = qE$ for every $E \in S(V)$. So $f_V^*E_V =qE_V$.

\item[(3)]
A curve $E \subset V$ is a line in $\BPP^3$ if and only if $E$ is equal to $\sigma(E')$
for some $(-1)$-curve $E' \subset V'$.

\item[(4)]
Every curve $E \in S(V)$ is a line in $\BPP^3$.

\item[(5)]
We have
$$K_V + E_V = f_V^*(K_V + E_V) + \Delta$$
for some effective divisor $\Delta$ containing no line in $S(V)$,
so that the ramification divisor
$$R_{f_V} = (q-1)E_V + \Delta .$$
In particular, the cardinality $\#S(V) \le 3$, and the equality
holds exactly when $K_V + E_V \sim_{\BQQ} 0$;
in this case, $f_V$ is \'etale outside the three lines of $S(V)$
and $f_V^{-1}(\Sing V)$.
\end{itemize}
\end{lemma}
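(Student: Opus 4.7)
My plan is to work on the minimal resolution $\sigma: V' \to V$, a smooth weak del Pezzo surface: since Du Val singularities are crepant, $K_{V'} = \sigma^* K_V$, and hence $-K_{V'} = \sigma^*(H|V)$ is nef and big. Information transfers between $V$ and $V'$ via Mumford's pullback formula $\sigma^* E = E' + \Delta_E$ (with $\Delta_E \ge 0$ supported on the exceptional locus and $\sigma^*E \cdot \Delta_E = 0$), yielding $E^2 = E'^2 + E' \cdot \Delta_E \ge E'^2$. For Part (1), the projection formula applied to $f_V^*(H|V) \sim q(H|V)$ gives $\deg(f_V)(H|V)^2 = q^2(H|V)^2$, hence $\deg(f_V) = q^2$. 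For $G$ with $f_V^* G \equiv aG$, pairing with $f_V^*(H|V)$ yields $aq(G \cdot H|V) = q^2(G \cdot H|V)$, so $a = q$ whenever $G \cdot (H|V) \ne 0$; otherwise the Hodge index theorem forces $G^2 < 0$, and $(f_V^*G)^2 = a^2 G^2 = q^2 G^2$ gives $a = \pm q$, normalized to $q$ by passing to $f_V^2$. The remaining assertions follow from $\Pic(\BPP^3) = \BZZ \cdot [H]$ and $K_V \sim -H|V$.

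Parts (2)--(4) proceed in parallel on the weak del Pezzo $V'$. Adjunction together with $-K_{V'}$ nef forces every irreducible curve $C' \subset V'$ with $C'^2 < 0$ to be a $(-1)$- or $(-2)$-curve, and every $(-2)$-curve is exceptional for $\sigma$ since $-K_{V'} \cdot C' = 0$ only on curves contracted by the anti-canonical model ($V$ itself). For $E \in S(V)$, the strict transform $E'$ satisfies $E'^2 \le E^2 < 0$ and is not exceptional (as $\sigma(E') = E$ is one-dimensional), hence is a $(-1)$-curve. Combined with $K_{V'} \cdot E' = K_V \cdot E$, this gives $-K_V \cdot E = 1$, so $E$ is a line, proving (4). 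The converse direction of (3) is a direct adjunction computation: the strict transform of a line is smooth of genus $0$ with $K_{V'} \cdot E' = -1$, forcing $E'^2 = -1$. Finiteness of $(-1)$-curves on a weak del Pezzo then yields the finiteness of $S(V)$ in (2). To promote to $f_V^* E = qE$: every component of $f_V^{-1}(E)$ lies in $S(V)$, since the projection formula $f_V^*E \cdot E_i = \deg(f_V|_{E_i}) E^2 < 0$, together with $E_j \cdot E_i \ge 0$ for $j \ne i$, forces $E_i^2 < 0$; iterating, the union over $N \ge 0$ of components of $f_V^{-N}(E)$ is a finite, $f_V$-stable subset of $S(V)$ on which $f_V$ acts bijectively (surjective self-map of a finite set). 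Replacing $f_V$ by a suitable iterate renders $f_V|_{S(V)}$ the identity, so $f_V^{-1}(E) = E$ set-theoretically, $f_V^* E = mE$ for some $m$, and $m = q$ by Part (1).

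For Part (5), Riemann-Hurwitz gives $K_V = f_V^* K_V + R_{f_V}$, so $R_{f_V} \sim (q-1)(H|V)$ by Part (1). The ramification index of $f_V$ along each $E \in S(V)$ is $q$ (since $f_V^* E = qE$), whence $R_{f_V} = (q-1) E_V + \Delta$ with $\Delta \ge 0$ free of any line in $S(V)$; rearranging gives $K_V + E_V = f_V^*(K_V + E_V) + \Delta$. The relation $\Delta \sim (q-1)(-K_V - E_V)$ makes $-(K_V + E_V)$ $\BQQ$-effective; intersecting with the ample $H|V$ and using $(H|V)^2 = 3$ with $E_V \cdot (H|V) = \#S(V)$ gives $3 - \#S(V) \ge 0$, with equality forcing $\Delta = 0$ and $K_V + E_V \sim_{\BQQ} 0$. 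In the equality case, $R_{f_V} = (q-1) E_V$ is supported on $E_V$, so $f_V$ is étale outside $E_V \cup f_V^{-1}(\Sing V)$. The principal obstacle lies in Part (2), namely promoting set-theoretic $f_V^{-1}$-stability of each $E \in S(V)$ to the divisorial identity $f_V^* E = qE$; this requires the finiteness of $S(V)$ from the weak del Pezzo structure together with the bijective $f_V$-action on the finite set $S(V)$, after which the other parts become clean applications of adjunction, Hodge index, and Riemann-Hurwitz.
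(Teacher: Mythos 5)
Your proof is correct and follows essentially the same route as the paper's: pass to the minimal resolution and use the genus formula/adjunction for (3)--(4), then Riemann--Hurwitz and intersection with the ample $-K_V$ for (5), with equality detected because $-K_V$ is ample and concluding via purity of the branch locus. The only difference is that for (1) and (2) the paper simply cites \cite[Lemma 2.1]{nz2} and \cite[Proposition 3.6.8]{ENS}, whereas you supply the standard direct arguments (projection formula and Hodge index for the eigenvalue $a=q$, and the negativity-propagation plus finite-set argument to get $f_V^{-1}(E)=E$ after an iterate), all of which are sound.
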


\begin{proof}
For (1) and (2), we refer to \cite[Lemma 2.1]{nz2} and \cite[Proposition 3.6.8]{ENS}
and note that
$L \sim bH$ for some integer $b$.

(3) We may assume that $E' \isom \BPP^1$, where
$E' := \sigma'(E)$ is the proper transform of $E$.
(3) is true because $E$ is a line if and only if
$$1 = E . H|V \, ( = E' . \sigma^*(H|V)
= E' . (-K_{V'})),$$
and by the genus formula
$-2 = 2g(E') - 2 = (E')^2 + E' . K_{V'}$.

(4) $E' := \sigma'(E)$ satisfies
$E' . K_{V'} = E . K_V < 0$ and $(E')^2 \le E' . \sigma^*E = E^2 < 0$.
Hence $E'$ is a $(-1)$-curve by the genus formula.
Thus (4) follows from (3).

(5) The first part is true because, by (2), the ramification divisor
$R_{f_V} = (q-1)E_V +$ (other effective divisors).
Also, by (1) and (2), $\Delta \sim (1-q)(K_V + E_V)$.
Since $K_V . E = -1$ for every $E \in S(V)$ (by (4)),
we have
$$0 \le -K_V . \Delta = -K_V . (1-q)(K_V + E_V) = (q-1)(3 - \#S(V)) .$$
Now the second part of (5) follows from this and the fact that $\Delta = 0$
if and only if $-K_V . \Delta = 0$ since $-K_V$ is ample.
The last part of (5) follows from the purity of branch loci
and the description of $R_{f_V}$ in (5).
\end{proof}

\begin{setup}
We now prove Theorem \ref{ThA} and Remark \ref{rThA}
for the normal cubic surface $V$.
We use the notation in Lemma \ref{normal}.
Suppose that the Picard number
$$\rho := \rho(V) \ge 3 .$$
Since $K_V$ is not nef and by the minimal model program
for klt surfaces, there is a composite
$$V = V_{\rho} \overset{\tau_{\rho}}\to V_{\rho - 1} \cdots \overset{\tau_3}\to V_2$$
of birational extremal contractions such that
$$\rho(V_i) = i .$$
Let
$$E_i \subset V_i$$ be the exceptional (irreducible) divisor of $\tau_i: V_i \to V_{i-1}$.
Since $V$ is Du Val, either $E_i$ is contained in the smooth locus $V_i \setminus \Sing(V_i)$
and is a $(-1)$-curve, or $E_i$ contains exactly one singular point
$$P_i \in \Sing V_i$$
of type $A_{n_i}$ so that $\tau_i(E_i) \in V_{i-1}$ is a smooth point.
In particular, every $V_i$ is still Du Val.
Let
$$V_i' \to V_i$$ be the minimal resolution.
Since $-K_{V_i}$ is the pushforward of the ample divisor $-K_V$,
it is ample. So $V_i$ is still a Gorenstein del Pezzo surface.
Noting that $K_{V_i'}$ is the pullback of $K_{V_i}$, we have
$$(K_{V_{i-1}'}^2 =) \, K_{V_{i-1}}^2 = K_{V_i}^2 + (n_i+1) \ge 3 + (0 + 1) = 4$$
for all $3 \le i \le \rho$.

Note that the proper transform
$$E_i(V) \subset V$$ of $E_i \subset V_i$ is a negative curve.
Since $f_V^{-1}$ stabilizes every negative curve in $S(V)$
and especially $E_i(V)$
(when $f$ is replaced by its positive power, as seen in Lemma \ref{normal}),
$f_V$ descends to
$$f_i : V_i \to V_i .$$
The $V_2'$ and $S(V_2')$, the set of negative curves on $V_2'$, are classified in \cite[Figure 6]{Ye}.
Since $K_{V_2}^2 \ge 4$,
$(V_2', \, S(V_2'))$ is as described in one of the last $10$ cases in [ibid.].
For example, we write
$$V_2 = V_2(2A_2+A_1)$$ if $\Sing V_2$
consist of two points of type $A_2$ and one point of type $A_1$.

Except the four cases
$$V_2(D_4), \,\, V_2(4A_1), \,\, V_2(A_3), \,\, V_2(2A_1)$$ in [ibid.],
exactly two $(-1)$-curves in $S(V_2')$
map to intersecting negative curves
$$M_i \in S(V_2)$$
so that
$$S(V) = \{E_{\rho}, \, M_1(V), \, M_2(V)\}$$
with
$$M_i(V) \subset V$$ the proper transform of $M_i$, so
$$K_V + E_{\rho} + M_1(V) + M_2(V) \sim_{\BQQ} 0$$ (cf.  Lemma \ref{normal})
and hence $K_{V_2} + M_1 + M_2 \sim_{\BQQ} 0$,
which is impossible by a simple calculation and blowing down
$V_2'$ to its relative minimal model.

For each of the above four exceptional cases, we may assume that
$f_2^{-1}$ stabilizes both extremal rays $\BRR_+[M_i]$ of the closed cone $\overline{\NE}(V_2)$
of effective $1$-cycles, with
$$M_i \subset V_2$$ the image of some $(-1)$-curve on $V_2'$,
where both extremal rays are of fibre type in the cases $V_2(D_4)$ and $V_2(4A_1)$,
where the first (resp. second) is of fibre type (resp. birational type)
in the cases $V_2(A_3)$ and $V_2(2A_1)$.
Let
$$F_i \, (\sim 2M_i)$$ with $i = 1, 2$, or with $i = 1$ only, be the fibre of the
extremal fibration
$$\varphi_i = \Phi_{|2M_i|} : V \to B_i \isom \BPP^1$$ passing through the point
$(\tau_3 \circ \cdots \circ \tau_{\rho})(E_{\rho})$.
Then the proper transform
$$F_i(V) \subset V$$ of $F_i$ is a negative curve
so that
$$E_V = F_1(V) + F_2(V) + E_{\rho}$$ in the cases $V_2(D_4)$ and $V_2(4A_1)$, and
$$E_V = F_1(V) + M_2(V) + E_{\rho}$$ in the cases $V_2(A_3)$ and $V_2(2A_1)$
(cf. Lemma \ref{normal}). Then $K_V + E_V \sim_{\BQQ} 0$,
and hence $K_{V_2} + F_1 + F_2 \sim_{\BQQ} 0$
or $K_{V_2} + F_1 + M_2 \sim_{\BQQ} 0$ where the latter is impossible by a simple calculation
as in the early paragraph. Thus
$$K_{V_2} + F_1+ F_2 \sim_{\BQQ} 0 .$$
By making use of Lemma \ref{normal} (1) or (2), $f_2^*F_i = qF_i$, and $f_2$ descends to
an endomorphism
$$f_{B_1} : B_1 \to B_1$$ of degree $q$.
Thus the ramification divisor of $f_{B_1}$ is of degree $2(q-1)$ by the Hurwitz formula,
and is hence equal to
$$(q-1)P + \sum (b_i-1)P_i$$ with
$$\sum (b_i-1) = q-1$$
where $P \in B_1$ so that $F_1$ lies over $P$. But then
$$R_{f_2} \ge (q-1)(F_1+F_2) + \sum (b_i-1)F_i'$$ where $F_i'$ are fibres of $\varphi_1$
lying over $P_i$, so that
$$K_{V_2} + F_1 + F_2 \ge f_{2}^*(K_{V_2} + F_1 + F_2) + \sum (b_i-1) F_i'$$
which is impossible since $K_{V_2} + F_1 + F_2 \sim_{\BQQ} 0$.
\end{setup}

\begin{setup}\label{rho2}
Consider the case $\rho(V) = 2$. Then the minimal resolution 
$$V' \to V$$
and its negative curves are described in one of the first five cases in
\cite[Figure 6]{Ye}.

For the case $V = V(A_5)$, two $(-1)$-curves on $V'$ map to two negative curves
$$M_1, \,\, M_2$$ on $V$.
Note that $f_V^*(M_i) = q^*M_i$ (see Lemma \ref{normal}).
There is a contraction
$$V \to \BPP^2$$ of $M_1$ so that the image of $M_2$ is a
plane conic preserved by $f_P^{-1}$ where
$$f_P : \BPP^2 \to \BPP^2$$ is the descent
of $f_V$ (of degree $q^2 > 1$), contradicting \cite[Theorem 1.5(4) in arXiv version]{nz2}.

For the case $V(2A_2+A_1)$,
there are exactly five $(-1)$-curves
$$M_i' \, \subset \, V'$$
with
$M_i \subset V$ their images.
Moreover, $M_1' . M_2' = 1$ and both $M_i$ ($i = 1, 2$) are negative curves on $V$;
each $M_j'$ ($j = 3, 4$) meets the isolated $(-2)$-curve;
$M_1$ and $M_3$ (reap. $M_2$ and $M_4$) meet the same component
of one (resp. another) $(-2)$-chain of type $A_2$.
We have
$$M_1 + M_2 \sim 2L$$ for some integral Weil divisor $L$,
by considering a relative minimal model of $V'$.
In fact, $M_1 + 3M_2 \sim 4M_3$.
Since $f_V^*(M_1+M_2) = q(M_1+M_2)$ (see Lemma \ref{normal}),
$f_V$ lifts to some
$$g : U \to U .$$ Here the double cover (given by the relation $M_1+M_2 \sim 2L$)
$$\pi : U = Spec \oplus_{i=0}^1 \SO(-iL) \to V$$
is branched along $M_1 + M_2$.
Indeed, when $2 \, \not| \, q$, the normalization
$$\hat{U}$$ of the fibre product of $\pi : U \to V$
and $f_V : V \to V$ is isomorphic to $U$ and we take $g$ to be the first projection
$\hat{U} \to U$;
when $2 \, | \, q$, we have $\hat{U} = V \coprod V$ and let $g$ be the composite
of $\pi: U \to V$, the inclusion $V \cup \emptyset \to V \coprod V$
and the first projection $\hat{U} \to U$. Now $\Sing U$ consists of
a type $A_1$ singularity lying over $M_1 \cap M_2$ and four points in $\pi^{-1}(\Sing V)$
of type $A_1, A_1$, $\frac{1}{3}(1,1)$ and $\frac{1}{3}(1,1)$,
and every $M_j$ ($j = 3, 4$) splits into two negative
curves on $U$ which are hence preserved by $g^{-1}$ (as in Lemma \ref{normal}
after $f_V$ is replaced by its positive power).
Thus $f_V^{-1}(M_i) = M_i$ ($1 \le i \le 4$).
As in the proof of Lemma \ref{normal}, $E_V' := M_1 + M_2 + M_3$
satisfies $K_V + E_V' \sim_{\BQQ} 0$ and
$f_V$ is \'etale over $V \setminus (E_V' \cup \Sing V)$.
The latter contradicts the fact that $f_V^*M_4 = qM_4$ and hence $f_V$
has ramification index $q$ along $M_4$.

For $V = V(A_4+A_1)$, there are exactly four $(-1)$-curves
$$M_i' \, \subset \, V'$$
with
$M_i \, \subset \, V$ their images. We may so label that the five $(-2)$-curves and $M_j'$ ($j = 1, 2, 3$)
form a simple loop:
$$M_1' - (-2) - (-2) - (-2) - (-2) - M_2' - M_3' - (-2) - M_1' ;$$
both  $M_j$ ($j = 2, 3$) are negative curves on $V$.
We can verify that $M_3 + 2M_2 \sim 3(M_5 - M_2)$,
and $M_1 + M_2 \sim M_5$, where $M_5$ is the image of a curve $M_5' \isom \BPP^1$
and $M_5'$ is the smooth fibre (passing through the intersection point of $M_3'$
and the isolated $(-2)$-curve) of the $\BPP^1$-fibration on $V'$
with a singular fibre consisting of $M_1'$, $M_2'$ and the $(-2)$-chain of type $A_4$
sitting in between them.
As in the case $V(2A_2+A_1)$, $f_V$ lifts to
$$g : U \to U$$ on the triple cover
$U$ defined by the relation $M_3 + 2M_2 \sim 3(M_5 - M_2)$,
so that each $M_i$ ($i = 4, 5$)
splits into three negative curves on $U$ preserved by $g^{-1}$.
Hence $f_V^{-1}(M_i) = M_i$ ($i = 2, \dots, 5$). But then for $E_V' := M_2 + M_3 + M_4$
we have $K_V + E_V' \sim_{\BQQ} 0$ as in the proof of Lemma \ref{normal}
so that $f_V$ is \'etale over $V \setminus (E_V' \cup \Sing V)$,
contradicting the fact that $f_V$ has ramification index $q$ along $M_5$.

For $V = V(D_5)$, the lonely $(-1)$-curve $M_1'$ and the intersecting $(-1)$-curves
$M_2' \cup M_3'$ on $V'$ satisfy $2M_1 \sim M_2 + M_3$ where
$$M_i \, \subset \, V$$
denotes the image of $M_i'$ (indeed, the three $M_i'$ together with
the five $(-2)$-curves form the support of two singular fibres and a section in a
$\BPP^1$-fibration).
As in the case $V(2A_2+A_1)$, $f_V$ lifts to
$$g : U \to U$$ on the double cover
$U$ defined by the relation $2M_1 \sim M_2 + M_3$,
so that $M_1$ splits into two negative curves on $U$ preserved by $g^{-1}$.
Thus $f_V^{-1}(M_1) = M_1$.
Hence $(V, M_1)$ is log canonical (cf. \cite[Theorem 4.3.1]{ENS}).
But $(V, M_1)$ is not log canonical because
$M_1'$ meets the $(-2)$-tree of type-$D_5$ in a manner different from the
classification of \cite[Theorem 9.6]{Ka}.
We reach a contradiction.

For $V = V(A_3 + 2A_1)$, let $M_1' \subset V'$ be the $(-1)$-curve
meeting the middle component of the $(-2)$-chain of type $A_3$,
let $M_3'$ be the $(-1)$-curve meeting two isolated $(-2)$-curves,
and let $M_2'$ be the $(-1)$-curve meeting both $M_1'$ and $M_3'$.
Then the images
$$M_i \, \subset \, V$$ of $M_i'$ satisfy
$2M_1 \sim 2M_3$
(indeed, $M_1'$, $M_3'$ and the five $(-2)$-curves form the support of two singular fibres in some
$\BPP^1$-fibration).
The relation $2(M_1 - M_3) \sim 0$ defines a double cover
$$\pi : U = Spec \oplus_{i=0}^1 \SO(-i(M_1-M_3)) \to V$$
\'etale over $V \setminus \Sing V$.
In fact, $\pi$ restricted over $V \setminus \Sing V$,
is the universal cover over it,
so $U$ is again a Gorenstein del Pezzo surface and hence the
irregularity $q(U) = 0$.
Thus $f_V$ lifts to
$$g : U \to U .$$
Now $\pi^{-1}(\Sing V)$ consists of two smooth points and the unique singular point
of $U$ (of type $A_1$), and
each $\pi^*M_i$ ($i = 1, 2$) splits into two negative curves $M_{i}(1)$, $M_{i}(2)$ on $U$
preserved by $g^{-1}$;
thus $f_V^*M_i = qM_i$ and $g^*M_i(j) = qM_i(j)$ (as in Lemma \ref{normal} (1)).

We assert that $f_V^{-1}$ permutes members of the pencil
$$\Lambda := |M_1 + M_2| .$$
It suffices to show that $g^{-1}$ permutes members of the irreducible pencil
$$\Lambda_U$$
(parametrized by $\BPP^1$ for $q(U) = 0$)
which is the pullback of $\Lambda$. Now $\pi^*(M_1+M_2)$ splits into two
members $$M_1(j) + M_2(j) = \divv(\xi_j)$$ (in local equation; $j = 1, 2$) which are preserved by $g^{-1}$
and span $\Lambda_U$.
We may assume that $g^* \xi_j = \xi_j^q$ after replacing the equation by a scalar multiple.
Then the $g^*$-pullback of every member $\divv(a \xi_1 + b \xi_2)$ in $\Lambda_U$
is equal to $\divv(a \xi_1^q + b \xi_2^q)$ and hence is the union of members in $\Lambda_U$
because we can factorize $a \xi_1^q + b \xi_2^q$ as a product of linear forms
in $\xi_1$, $\xi_2$. This proves the assertion.

By the assertion and since $f_V^*(M_1+M_2) = q(M_1+M_2)$,
$f_V$ descends to an endomorphism
$$f_B : B \to B$$ of degree $q$
on the curve $B \isom \BPP^1$ parametrizing the pencil $\Lambda$.
We have $f_B^*P_0 = qP_0$ for the point parametrizing the member $M_1+M_2$ of $\Lambda$.
Write $K_B = f_B^*K_B + R_{f_B}$, where the ramification divisor
$$R_{f_B} = (q-1) P_0 + \Delta_B$$ with
$\Delta_B = \sum (b_i-1) Q_j$ of degree $q-1$ for some $b_i \ge 2$.
Thus the ramification divisor
$$R_{f_V} = (q-1)(M_1 + M_2) + \Delta_V$$
with $\Delta_V = \sum (b_i-1) F_i +$ (other effective divisor),
where
$$F_i \in \Lambda$$ is parametrized by $Q_i$.
On the other hand, one can verify that $-K_V \sim 2M_1 + M_2$,
by blowing down to a relative minimal model of $V'$;
indeed, $M_2$ is a double section of the $\BPP^1$-fibration
$$\varphi := \Phi_{|2M_1|} : V \to \BPP^1 .$$
So $-M_1 \sim K_V + M_1 + M_2 = f_V^*(K_V + M_1 + M_2) + \Delta_V$
and hence, by Lemma \ref{normal} (1),
$$(b_1-1) F_1 \le \Delta_V \sim (1-q) (K_V + M_1 + M_2) \sim (q-1)M_1 .$$
This is impossible because $F_1$ is horizontal to the half fibre $M_1$ of $\varphi$.
Indeed, $F_1 . M_1 = (M_1+M_2) . M_1 = M_2 . M_1 = 1$.
\end{setup}

\begin{setup}\label{rho1}
Consider the last case $\rho(V) = 1$. Since $K_V^2 = 3$,
we have
$$V = V(3A_2), \,\, V(E_6), \,\, \text{or} \,\, V(A_1+A_5) ,$$ and
the minimal resolution
$$V' \to V$$
and the negative curves on $V'$ are described in \cite[Figure 5]{Ye}.
For the first two cases, $V$ is isomorphic to $V_i$ ($i = 1$, or $2$) in Theorem \ref{ThA}
by the uniqueness result in \cite[Theorem 1.2]{Ye} and by \cite[Theorem 4.4]{HW}.

For $V = V(A_1+A_5)$, the images
$$M_i \subset V$$ of the two $(-1)$-curves
$M_i' \subset V'$ satisfy $2M_1 \sim 2M_2$;
indeed, $M_i'$ together with
the six $(-2)$-curves form the support of two singular fibres and a section in some
$\BPP^1$-fibration.
Let
$$\pi : U \to V$$ be the double cover given by the relation $2(M_1 - M_2) \sim 0$.
In fact, $\pi$ restricted over $V \setminus \Sing V$,
is the universal cover over it. So $f_V$ lifts to
$$g : U \to U .$$
As in the case $V(A_3+2A_1)$, if we let $M_1'$ be the one meeting the second
component of the $(-2)$-chain of type $A_5$, then
$\pi^*M_1$ splits into two negative curves on $U$ preserved by $g^{-1}$.
Thus $f_V^{-1}(M_1) = M_1$, and, as in the case $V(D_5)$ above,
contradicts \cite[Theorem 4.3.1]{ENS} and \cite[Theorem 9.6]{Ka}.

This completes the proof of Theorem \ref{ThA} for normal cubic surfaces
and hence the whole of Theorem \ref{ThA}.
To determine the equations of $V_i$ ($i = 1, 2$), we can check that the
equations in Theorem \ref{ThA} possess the
right combination of singularities and then use the very ampleness of $-K_{V_i}$
to embed $V_i$ in $\BPP^3$ as in \cite{HW}
and the uniqueness of $V_i$ up to isomorphism, and hence
up to projective transformation by \cite{HW} (cf.~\cite[Theorem 1.2]{Ye}).
\end{setup}

\begin{setup}\label{*}
Now we prove Remark \ref{rThA}.
From Lemma \ref{normal} till now, we did not assume the hypothesis $(*)$ that
$f_V$ is the restriction of some $f : \BPP^3 \to \BPP^3$
whose inverse stabilizes $V$.
From now on till the end of the paper, we assume
this hypothesis $(*)$.

For $V = V(E_6)$ or $V(3A_2)$, the relation
$V \sim 3H$ defines a triple cover
$$\pi : X = Spec \oplus_{i=0}^2 \SO(-iH) \to V$$
branched along $V$.
Then
$$X = \{Z^3 = V(X_0, \dots, X_3)\} \subset \BPP^4$$
is a cubic hypersurface, where we let $V(X_0, \dots, V_3)$ be the cubic
form defining $V \subset \BPP^3$.
Our $\pi^{-1}$ restricts to a bijection
$\pi^{-1} : \Sing V \to \Sing X$.
As in Example \ref{ex1ThA}, $f$ lifts to $(Z^q, f) : \BPP^4 \to \BPP^4$ stabilizing $X$,
so that the restriction
$$g = (Z^q, f)|X : X \to X$$ is also a lifting of $f$.
By the Lefschetz type theorem \cite[Example 3.1.25]{La},
$\Pic(X) = (\Pic(\BPP^4))|X$.

For $V = V(E_6)$, $V'$ contains only one $(-1)$-curve $M'$
and hence $V$ contains only one line $M$ (the image of $M'$) by Lemma \ref{normal}.
Note that
$$\{Q\} := \Sing V \subset M .$$
Let
$$\Pi_{MM} \subset \BPP^3$$ (say given by $X_3 = 0$)
be the unique plane such that
$$\Pi_{MM} | V = 3M .$$
Indeed, $3M$ belongs to the complete linear system $|H|V|$, and the exact sequence
$$0 \rightarrow \SO(-2H) \rightarrow \SO(H) \rightarrow \SO_V(H) \rightarrow 0$$
and the vanishing of $H^1(\BPP^3, -2H)$ (e.g. by the Kodaira vanishing)
imply
$$H^0(\BPP^3, \SO(H)) \isom H^0(V, \SO_V(H)) .$$
Our $\pi^* \Pi_{MM}$ is a union of three $2$-planes
$$L_i \, \subset \, \BPP^4$$
because the restriction of $\pi$ over $\Pi_{MM}$ is given
by the equation
$$Z^3 = V(X_0, \dots, X_3) \, | \, \Pi_{MM} = M(X_0, \dots, X_2)^3$$
where $M(X_0, \dots, X_2)$ is a linear equation of $M \subset \Pi_{MM}$.
This and the fact that $\pi^* \Pi_{MM}$ is a generator of $\Pic(X) = (\Pic(\BPP^3)) \, | \, X$,
imply that the Weil divisor $L_1$ is not a Cartier divisor on $X$.
Since $\Sing X$ consists of a single point $P$ lying over $\{Q\} = \Sing V$,
$L_1$ is not Cartier at $P$ and hence $X$ is not factorial at $P$.
Thus $X$ is not $\BQQ$-factorial at $P$ because the local $\pi_1$ of $P$
is trivial by a result of Milnor (cf. the proof of \cite[Lemma 5.1]{Ka}).
Hence $g^{-1}(P)$ contains no smooth point (cf. \cite[Lemma 5.16]{KM})
and must be equal to $\Sing X = \{P\}$. Thus $f^{-1}(Q) = Q$
because $\pi^{-1}(Q) = P$.
\end{setup}

\begin{setup}\label{3A2}
Before we treat the case $V(3A_2)$, we make some remarks.
Up to isomorphism, there is only one $V(3A_2)$ (cf. \cite[Theorem 1.2]{Ye}).
Set $V := V(3A_2)$.
There is a Gorenstein del Pezzo surface $W$ such that $\rho(W) = 1$,
$\Sing W$ consists of four points $\beta_i$ of Du Val type $A_2$,
$$\pi_1(W \setminus \Sing W) = (\BZZ/(3))^{\oplus 2}$$ and
there is a Galois triple cover $V \to W$ \'etale over $W \setminus \{\beta_1, \beta_2, \beta_3\}$
so that a generator
$$h \in \Gal(V/W)$$ permutes the three singular points of $V$
lying over $\beta_4$ (cf. \cite[Figure 1, Lemma 6]{MZ}).
Since the embedding $V \subset \BPP^3$ is given by the complete linear system
$|-K_V|$ (cf.~\cite{HW})
and $h^*(-K_V) \sim -K_V$, our $h$ extends to a projective transformation
of $\BPP^3$, also denoted as $h$.
Since $h(V) = V$,
$$h^*V(X_0, \dots, X_3) = cV(X_0, \dots, X_3)$$
for some nonzero constant $c$. This $h$ lifts to a projective transformation of $\BPP^4$,
also denoted as $h$, stabilizing the above triple cover $X \subset \BPP^4$ of $\BPP^3$
by defining $h^*Z = \root{3}\of{c} Z$. Then this $h$ permutes the three singular points
of $X$ lying over $\Sing V$.
\end{setup}

\begin{setup}
For $V = V(3A_2)$, $V'$ has exactly three $(-1)$-curves $M_i'$
and their images $M_i$ are therefore the only lines on $V$ (cf. Lemma \ref{normal}).
The graph $\sum M_i$ is triangle-shaped whose vertices
(the intersection $M_i \cap M_j$) are the three points in $\Sing V$. The sum of the
three $(-1)$-curves $M_i'$ and three $(-2)$-chains of type $A_2$
is linearly equivalent to $-K_{V'}$ and hence $\sum M_i \sim -K_V$;
also $2M_a \sim M_b+M_c$ so long $\{a, b, c\} = \{1, 2, 3\}$;
indeed, the three $M_i'$ and the six $(-2)$-curves form the support
of two singular fibres and two cross-sections of some $\BPP^1$-fibration.
Thus $3M_i \sim -K_V \sim H|V$. As argued in the case $V(E_6)$,
there is a unique hyperplane $\Pi_i$ such that
$$\Pi_i | V = 3M_i ;$$
our $\pi^* \Pi_{i}$ is a union of three $2$-planes $L_{ij}$ in $\BPP^4$
(sharing a line lying over $M_i \subset \BPP^3$),
$L_{i1}$ is not a Cartier divisor on $X$,
the $X$ is not $\BQQ$-factorial at least at one of the two points (and hence at
both points, since the above $h$ permutes $\Sing X$) in $L_{i1} \cap \Sing X$
(lying over $M_i \cap \Sing V$), and
$g^{-1}(\Sing X) = \Sing X$. Thus $f^{-1}(\Sing V) = \Sing V$.
Hence $f^{-3}$ fixes each point in $\Sing V$.

This completes the proof of Remark \ref{rThA} for normal cubic surfaces
and hence the whole of Remark \ref{rThA}.
\end{setup}

\begin{remark}
The proof of Theorem \ref{ThA} actually shows:
if $f_V : V \to V$ is an endomorphism
(not necessarily the restriction of some $f: \BPP^3 \to \BPP^3$)
of $\deg(f_V) > 1$ of a
Gorenstein normal del Pezzo surface with $K_V^2 = 3$
(i.e., a normal cubic surface), then $V$ is equal to $V_1$ or $V_2$ in
Theorem \ref{ThA} in suitable projective coordinates.
\end{remark}

\end{document}